\documentclass[12pt]{article}
\usepackage[english]{babel}
\usepackage{geometry,amsmath,amsthm,amssymb,latexsym}

\usepackage[colorlinks=true,linkcolor=blue,citecolor=blue,urlcolor=black,pdfpagelabels=false]{hyperref}

\usepackage{thmtools}
\theoremstyle{plain}
  \declaretheorem[numberwithin=section]{theorem}

  \declaretheorem[numberlike=theorem]{lemma}

\theoremstyle{definition}
  
  \declaretheorem[numberlike=theorem]{example}
  \declaretheorem[numberlike=theorem]{remark}

\newcommand{\assign}{:=}

\begin{document}

\title
{Sums of powers of binomials, their Ap\'ery limits, and~Franel's~suspicions}

\author{
  Armin Straub
  \and
  Wadim Zudilin
}

\date{March 19, 2022}

\maketitle

\begin{abstract}
  We explicitly determine the Ap\'ery limits for the sums of powers of
  binomial coefficients. As an application, we prove a weak version of
  Franel's conjecture on the order of the recurrences for these sequences.
  Namely, we prove the conjectured minimal order under the assumption that
  such a recurrence can be obtained via creative telescoping.
\end{abstract}

\section{Introduction}

More than a century ago, Franel \cite{franel94,franel95} investigated the
sums of integral powers of binomial coefficients
\begin{equation}
  A^{(s)} (n) = \sum_{k = 0}^n \binom{n}{k}^s . \label{eq:franel:d}
\end{equation}
The special cases $A^{(1)} (n) = 2^n$ and $A^{(2)} (n) = \binom{2 n}{n}$ are
simple. On the other hand, the numbers $A^{(3)} (n)$, known as \emph{Franel
numbers} \cite[A000172]{oeis}, cannot be expressed as a finite linear
combination of hypergeometric terms \cite[p.~160]{aeqb}. We will refer to
the numbers $A^{(s)} (n)$ as the generalized Franel numbers. Long before the
computer-algebra era, Franel \cite{franel94} computed recurrences for
$A^{(3)} (n)$ as well as, in the second note \cite{franel95}, for $A^{(4)}
(n)$. Based on these findings, he predicted\,---\,quite optimistically\,---\,a
general shape of the recursion for general $s$. Since then, explicit
recurrences for $A^{(s)} (n)$ have been computed using creative telescoping by
Perlstadt \cite{perlstadt-franel} for $s = 5, 6$ and, likewise, by McIntosh
\cite{mcintosh-phd} for $s \leq 10$. Creative telescoping, which we
briefly review in Section~\ref{sec:ct}, is a powerful computer-algebra
technique that can, for fixed integer $s$, algorithmically determine a
recurrence satisfied by $A^{(s)} (n)$. More specifically, given a
hypergeometric term like $a (n, k) = \binom{n}{k}^s$, it produces an operator
$P (n, N)$ (here, $N$ is the shift operator in $n$: $N a (n, k) \assign a (n +
1, k)$), as well as another hypergeometric term $b (n, k)$, such that
\begin{equation}
  P (n, N) a (n, k) = b (n, k + 1) - b (n, k) . \label{eq:ct:Pb:intro}
\end{equation}
Summing the relation \eqref{eq:ct:Pb:intro} (with some care and under some
mild assumptions; see the beginning of Section~\ref{sec:ct}) over all integers
$k$, the contribution of $b (n, k)$ telescopes away, allowing us to conclude
that $A^{(s)} (n)$ is annihilated by the operator $P (n, N)$; in this case, we
say that $A^{(s)} (n)$ satisfies the \emph{telescoping recurrence equation}
$P (n, N) A^{(s)} (n) = 0$. Notice that this telescoping equation is based on
the representation \eqref{eq:franel:d}, as it uses the operator $P (n, N)$ for
the hypergeometric term $a (n, k) = \binom{n}{k}^s$. Using a different
hypergeometric representation\,---\,and such exist (for example, $A^{(3)} (n)
= \sum_{k = 0}^n \binom{n}{k}^2 \binom{2 k}{n}$)\,---\,may potentially lead to
a different operator.

Franel's suspicions about the form of linear recurrence with polynomial
coefficients for $A^{(s)} (n)$ are not supported by computations in
\cite{mcintosh-phd,perlstadt-franel} with the exception of one particular
aspect, its order. Specifically, Franel conjectured it to be \emph{equal} to
$\lfloor (s + 1) / 2 \rfloor$. While the fact that the order of the recurrence is
bounded from above by this quantity is shown to be true by Stoll
\cite{stoll-rec-bounds}, who indicates that the earlier proof of Cusick
\cite{cusick-rec} has a gap, it remains open to demonstrate that, in
general, no recurrence of lower order exists. The possibility for $A^{(s)}
(n)$ for $s \geq 3$ to satisfy a recurrence of order~$1$, equivalently,
to be a hypergeometric term in the single variable $n$, can be ruled out using
the algorithm \texttt{Hyper} \cite{aeqb}, when $s$ is fixed. On the other
hand, using congruential properties, Yuan, Lu and Schmidt \cite{yls-franel}
prove that, for \emph{any} $s \geq 3$, the sequence $A^{(s)} (n)$
cannot satisfy a recurrence of order~$1$. This implies that Franel's
recurrences of order $2$ for $s = 3$ and $s = 4$ are of minimal order. In
general, to prove that the order $\lfloor (s + 1) / 2 \rfloor$ recurrence
constructed in \cite{stoll-rec-bounds} for the sequence $A^{(s)} (n)$
is of minimal order, it is sufficient to show that the corresponding
recurrence operator is irreducible (though this is not a necessary condition).
For fixed (and sufficiently small) $s$, the latter task is again
accessible for modern computer-algebra algorithms
\cite{bronstein-factoring,hz-fast} (for an explicit example of bounding the
possible degree of a lower-order recurrence, we also refer to the proof of
Proposition~8.4 in \cite[pp.~692--694]{bbkm-latticewalks}). One goal of this
paper is to address the problem for \emph{generic} $s$ by showing the
following general result.

\begin{theorem}
  \label{thm:franel:conj:ct}Any telescoping recurrence satisfied by $A^{(s)}
  (n)$ based on the representation \eqref{eq:franel:d} has order at least
  $\lfloor (s + 1) / 2 \rfloor$.
\end{theorem}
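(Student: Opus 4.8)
\emph{Strategy.}
I would show that \emph{every} operator $P(n,N)$ occurring in a creative-telescoping relation \eqref{eq:ct:Pb:intro} for $a(n,k)=\binom{n}{k}^{s}$ annihilates $\lfloor(s+1)/2\rfloor$ linearly independent sequences; since the solution space of an order-$r$ operator, viewed as germs of sequences at infinity, has dimension $r$, this yields the bound. The extra sequences come from summing $\binom{n}{k}^{s}$ over lattices shifted off $\mathbb Z$. Reading $\binom{z}{k}=\Gamma(z+1)\big/\!\bigl(\Gamma(k+1)\Gamma(z-k+1)\bigr)$ for complex arguments, put, for $c\in\mathbb R/\mathbb Z$,
\[
  \Phi_c(n)\assign\sum_{j\in\mathbb Z}\binom{n}{c+j}^{s}.
\]
Since $\binom{n}{c+j}^{s}$ decays like $|j|^{-s(n+1)}$ as $j\to\pm\infty$, this converges for every integer $n\ge1$, is continuous in $c$, and $\Phi_0(n)=A^{(s)}(n)$ because $\binom{n}{k}^{s}$ vanishes for $k\notin\{0,\dots,n\}$.

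\emph{Step 1: each $\Phi_c$ is a solution of $P$.}
For the summand $a(n,k)=\binom{n}{k}^{s}$ the certificate necessarily has $R(n,k)\assign b(n,k)/\binom{n}{k}^{s}$ a rational function, so \eqref{eq:ct:Pb:intro} reads $P(n,N)\binom{n}{k}^{s}=\Delta_k\!\bigl(R(n,k)\binom{n}{k}^{s}\bigr)$, an identity between $\binom{n}{k}^{s}$ and rational functions of $(n,k)$; it remains valid when the (a priori integer) $k$ is replaced by $c+j$. Summing over $j\in\mathbb Z$ and using that the poles of $R$ lie at integer shifts of $n$ — hence are avoided by $c+\mathbb Z$ for every non-integer $c$ — while $R(n,c+j)\binom{n}{c+j}^{s}\to0$ as $j\to\pm\infty$, the telescoped part vanishes, giving $P(n,N)\Phi_c(n)=0$ for all non-integer $c$ and all large integers $n$. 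Hence $P$ annihilates the $\mathbb C$-span of $\{\Phi_c\}$.

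\emph{Step 2: the $\Phi_c$ span dimension $\ge\lfloor(s+1)/2\rfloor$.}
By Poisson summation $\Phi_c(n)=\sum_{l\in\mathbb Z}\widehat g_n(l)\,e^{2\pi i l c}$ with $\widehat g_n(l)=\int_{\mathbb R}\binom{n}{x}^{s}e^{-2\pi i l x}\,dx$, so every $\widehat g_n(l)=\int_0^1\Phi_c(n)\,e^{-2\pi i l c}\,dc$ also lies in the (finite-dimensional, hence closed) solution space of $P$. A steepest-descent evaluation as $n\to\infty$ — writing $x=\xi n$, the phase is $n\bigl(s\mathcal H(\xi)-2\pi i l\xi\bigr)$ with $\mathcal H(\xi)=-\xi\log\xi-(1-\xi)\log(1-\xi)$, whose nondegenerate saddle is at $\xi=\bigl(1+e^{2\pi i l/s}\bigr)^{-1}$ with critical value $s\log\bigl(1+e^{-2\pi i l/s}\bigr)$ — gives
\[
  \widehat g_n(l)\sim C_l\,\bigl(1+e^{-2\pi i l/s}\bigr)^{sn}\,n^{\alpha_l},\qquad C_l\neq0,
\]
for $l=0,1,\dots,\lfloor s/2\rfloor$, with $l=s/2$ (possible only for $s$ even, where the saddle runs off to $\infty$) omitted. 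On this range of $l$ the moduli $\bigl|1+e^{-2\pi i l/s}\bigr|^{s}=\bigl(2\cos(\pi l/s)\bigr)^{s}$ are positive and strictly decreasing, so these $\lfloor(s+1)/2\rfloor$ sequences $\widehat g_n(l)$ have pairwise distinct exponential growth rates and are therefore linearly independent, even as germs at infinity. Being solutions of $P$, they force $\operatorname{ord}P\ge\lfloor(s+1)/2\rfloor$.

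\emph{Main obstacle, and loose ends.}
The difficult step is the asymptotic analysis of $\widehat g_n(l)$: deforming the contour of $\int_{\mathbb R}\binom{n}{x}^{s}e^{-2\pi i l x}\,dx$ through the relevant complex saddle, controlling the remaining contribution, and above all verifying $C_l\neq0$. This is exactly the estimate underlying the explicit evaluation of the Apéry limits; through the constant-term representation $A^{(s)}(n)=\operatorname{CT}\Bigl[\bigl(\textstyle\prod_{i=1}^{s-1}(1+x_i)\bigr)\bigl(1+(x_1\cdots x_{s-1})^{-1}\bigr)\Bigr]^{n}$ and its $x_i\mapsto x_i^{-1}$ symmetry (the avatar of $k\mapsto n-k$), the surviving saddles $l=0,\dots,\lfloor s/2\rfloor$ with $l\neq s/2$ match the critical points of the associated phase function modulo that symmetry, which is precisely why their number is $\lfloor(s+1)/2\rfloor$. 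Routine points to dispatch along the way: convergence and continuity of $c\mapsto\Phi_c(n)$, the precise pole locus of the certificate $R$ (so that $\Phi_c$ is a genuine solution), and the validity of Poisson summation for $g_n(x)=\binom{n}{x}^{s}$.
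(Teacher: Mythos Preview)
Your approach is genuinely different from the paper's, though the two share a common starting point. Both construct a one-parameter family of solutions to the telescoping recurrence by shifting the summation variable off the integers: your $\Phi_c(n)=\sum_{j\in\mathbb Z}\binom{n}{c+j}^{s}$ is, up to the explicit factor $(\pi t/\sin(\pi t))^{s}$ and an $O(t^{s})$ tail, the paper's $A^{(s)}(n,t)$ at $t=-c$. The divergence is in how this family is decomposed and how linear independence is established. The paper takes Taylor coefficients $A_j^{(s)}(n)$ at $t=0$; these are rational sequences all growing like $2^{sn}$, and their independence comes from the Ap\'ery limits $A_j^{(s)}(n)/A^{(s)}(n)\to\varphi_j^{(s)}\pi^{2j}$ together with the transcendence of $\pi$. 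You instead take Fourier coefficients $\widehat g_n(l)$ in $c$ and argue independence from pairwise distinct exponential growth rates $(2\cos(\pi l/s))^{s}$. Conceptually this trades a deep arithmetic input (transcendence) for a harder analytic one (complex steepest descent for each $l\ge 1$).

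That said, your proposal has two gaps worth flagging. First, the assertion that ``the poles of $R$ lie at integer shifts of $n$'' is not justified for an arbitrary telescoping certificate; the paper only verifies this computationally for $s\le 20$. The paper's Theorem~\ref{thm:A:t:rec} circumvents this by showing, via the relation $b(n,t+1)-b(n,t)$ being entire, that $b(n,\cdot)$ itself is entire for all large $n$, regardless of where the poles of $R$ sit. You should patch Step~1 accordingly. Second, and more seriously, your claim that the steepest-descent estimate for $\widehat g_n(l)$ with $l\ge 1$ ``is exactly the estimate underlying the explicit evaluation of the Ap\'ery limits'' is not accurate. The paper's asymptotic analysis (Section~3) treats only the dominant, real saddle at $\xi=1/2$; the resulting limit \eqref{eq:franel:lim:sin} says nothing about the subdominant solutions you need. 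Carrying out the contour deformation to the complex saddle $\xi_0=(1+e^{2\pi i l/s})^{-1}$, showing that it dominates the deformed integral, and verifying $C_l\neq 0$ is genuinely additional work---standard in spirit but not supplied here, and strictly harder than what the paper does. If completed, your route would yield the same lower bound without invoking transcendence, which is appealing; but as written the core analytic step remains an outline.
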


In particular\,---\,in light of \cite{stoll-rec-bounds}\,---\,this implies that
Franel's conjecture on the exact order is true if the minimal-order recurrence
satisfied by $A^{(s)} (n)$ is a telescoping recurrence equation. We refer to
Remark~\ref{rk:franel:recs} for evidence that this is the case.

\begin{remark}
  \label{rk:mcintosh}One way of establishing lower bounds on the order of
  recurrences satisfied by a $D$-finite sequence $A (n)$ comes from the
  observation by McIntosh \cite[Section~4.1,~p.~27]{mcintosh-phd} that, if
  the sequence has the property that $A (n + 1) / A (n) \rightarrow \mu$ where
  $\mu$ is an algebraic number of degree $d$, then $A (n)$ cannot satisfy a
  recurrence defined over $\mathbb{Q}$ of order less than $d$. For the
  generalized Franel numbers $A^{(s)} (n)$, however, it follows from
  \eqref{eq:franel:asy} that $A^{(s)} (n + 1) / A^{(s)} (n) \rightarrow 2^s$,
  so that this criterion is of no help.
\end{remark}

Ap\'ery's groundbreaking proof \cite{apery,alf} of the irrationality of $\zeta
(3)$ is centred around the fact that
\begin{equation}
  \lim_{n \rightarrow \infty} \frac{B (n)}{A (n)} = \frac{\zeta (3)}{6},
  \label{eq:apery3:lim}
\end{equation}
where the sequences
\begin{equation}
  A (n) = \sum_{k = 0}^n \binom{n}{k}^2 \binom{n + k}{k}^2 \label{eq:apery3}
\end{equation}
and $B (n)$ both are solutions to the three-term recurrence
\begin{equation}
  (n + 1)^3 u_{n + 1} = (2 n + 1) (17 n^2 + 17 n + 5) u_n - n^3 u_{n - 1}
  \label{eq:apery3:rec}
\end{equation}
with initial conditions $A (0) = 1$, $A (1) = 5$ as well as $B (0) = 0$ and $B
(1) = 1$. Limits, like \eqref{eq:apery3:lim}, of quotients of solutions to a
common linear recurrence are refered to as \emph{Ap\'ery limits}. For an
introduction to such limits we refer to \cite{cs-aperylimits} as well as to
the papers \cite{avz-apery-limits,yang-apery-limits}. The main
goal of this paper is to explicitly determine the Ap\'ery limits associated
to the generalized Franel numbers $A^{(s)} (n)$. In fact, we will then prove
Theorem~\ref{thm:franel:conj:ct}, discussed above, in
Section~\ref{sec:ct:order} as an application of these Ap\'ery limits.

It was conjectured in \cite{cs-aperylimits} that, for $s \geq 2 m + 1$,
the minimal-order recurrence satisfied by $A^{(s)} (n)$ has Ap\'ery limits
that are rational multiples of $\zeta (2), \zeta (4), \ldots, \zeta (2 m)$.
More precisely, this means that the recurrence has rational solutions
$A^{(s)}_j (n)$, where $j \in \{ 0, 1, \ldots, m \}$, (with $A^{(s)}_0 (n) = A^{(s)}
(n)$) such that
\begin{equation*}
  \lim_{n \rightarrow \infty} \frac{A^{(s)}_j (n)}{A^{(s)} (n)} \in \pi^{2 j}
   \mathbb{Q}.
\end{equation*}
In Theorem~\ref{thm:franel:limits}, we prove this conjecture, with the
minimal-order recurrence replaced by the minimal-order telescoping recurrence,
and explicitly describe all of these Ap\'ery limits. In particular, in terms
of
\begin{equation}
  A^{(s)} (n, t) \assign \sum_{k = 0}^n \binom{n}{k}^s \left[ \prod_{j = 1}^k
  \left(1 - \frac{t}{j} \right) \prod_{j = 1}^{n - k} \left(1 + \frac{t}{j}
  \right) \right]^{- s} =A^{(s)} (n, -t), \label{eq:A:t}
\end{equation}
we identify specific solutions $A^{(s)}_j (n) \in\mathbb{Q}$ as the coefficients
in the $t$-expansion
\begin{equation}
  A^{(s)} (n, t) = \sum_{j \geq 0} A^{(s)}_j (n) t^{2 j} . \label{eq:A:j}
\end{equation}

\begin{theorem}
  \label{thm:franel:limits}Any telescoping recurrence satisfied by $A^{(s)}(n)$
based on the representation \eqref{eq:franel:d}
  is solved, for large enough $n$, by the sequences $A^{(s)}_j(n) \in\mathbb{Q}$
  defined in \eqref{eq:A:j}, where $j \in \{ 0, 1, \ldots, \lfloor (s -
  1) / 2 \rfloor \}$. Furthermore, we have
  \begin{equation}
    \lim_{n \rightarrow \infty} \frac{A^{(s)}_j (n)}{A^{(s)} (n)} =
    \varphi_j^{(s)} \pi^{2 j}, \label{eq:franel:limits}
  \end{equation}
  where $\varphi_j^{(s)} \in \mathbb{Q}_{> 0}$ is the coefficient of $t^{2 j}$
  in the power series of $(t / \sin (t))^s$.
\end{theorem}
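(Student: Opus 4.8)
The plan is to work directly with the two‑variable deformation $A^{(s)}(n,t)$ in \eqref{eq:A:t} and to show that it is annihilated, as a function of $n$, by any telescoping operator $P(n,N)$ coming from creative telescoping on $a(n,k)=\binom{n}{k}^s$. The key observation is that the $t$‑deformation is nothing but a change of summand $a(n,k)\mapsto a(n,k,t)$ where $a(n,k,t)=\binom{n}{k}^s\big[\prod_{j=1}^k(1-t/j)\prod_{j=1}^{n-k}(1+t/j)\big]^{-s}$, and that this deformed summand is still a hypergeometric term in $k$ (for generic fixed $t$ and $n$) with the \emph{same} certificate structure: concretely, the ratio $a(n+1,k,t)/a(n,k,t)$ and $a(n,k+1,t)/a(n,k,t)$ are obtained from the corresponding ratios at $t=0$ by multiplying by explicit rational factors in $n,k,t$. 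One then checks that the very same operator $P(n,N)$ produced by creative telescoping satisfies $P(n,N)\,a(n,k,t)=b(n,k+1,t)-b(n,k,t)$ for a deformed certificate $b(n,k,t)$; summing over $k$ gives $P(n,N)A^{(s)}(n,t)=0$ for all $t$, and extracting the coefficient of $t^{2j}$ via \eqref{eq:A:j} shows $P(n,N)A^{(s)}_j(n)=0$ for every $j\ge 0$. This already yields the first assertion once we know that the relevant $A^{(s)}_j$ are not eventually zero, which follows from the limit computation below.

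For the limit \eqref{eq:franel:limits}, the idea is a Laplace‑type / dominant‑term analysis of $A^{(s)}(n,t)$ as $n\to\infty$ with $t$ small but fixed. Using the Gamma‑function identity $\prod_{j=1}^k(1-t/j)=\Gamma(k+1-t)/(\Gamma(1-t)\,k!)$ and similarly for the other product, one rewrites the bracket as $\big[\Gamma(k+1-t)\Gamma(n-k+1+t)\big/\big(\Gamma(1-t)\Gamma(1+t)\,k!\,(n-k)!\big)\big]^{-s}$, so that
\begin{equation*}
  A^{(s)}(n,t)=\big(\Gamma(1-t)\Gamma(1+t)\big)^{s}\sum_{k=0}^{n}\frac{(n!)^{s}}{\big(\Gamma(k+1-t)\Gamma(n-k+1+t)\big)^{s}}.
\end{equation*}
The summand is maximized near $k\sim n/2$; a standard saddle‑point expansion there shows that $A^{(s)}(n,t)/A^{(s)}(n)$ tends to $\big(\Gamma(1-t)\Gamma(1+t)\big)^{s}$, because the Gaussian widths and the polynomial prefactors match between numerator and denominator up to the $k$‑independent front factor. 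Finally, the reflection formula gives $\Gamma(1-t)\Gamma(1+t)=\pi t/\sin(\pi t)$, so $\big(\Gamma(1-t)\Gamma(1+t)\big)^{s}=(\pi t/\sin(\pi t))^{s}=\sum_{j\ge 0}\varphi^{(s)}_j\,(\pi t)^{2j}$ with $\varphi^{(s)}_j$ the coefficient of $t^{2j}$ in $(t/\sin t)^{s}$; matching coefficients of $t^{2j}$ with \eqref{eq:A:j} produces exactly \eqref{eq:franel:limits}. Positivity of $\varphi^{(s)}_j$ is immediate from the product/series for $t/\sin t$.

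The main obstacle I anticipate is making the interchange of "sum over $k$'' with "coefficient extraction in $t$'' and with "limit in $n$'' fully rigorous: one needs uniform control of the tail of the $k$‑sum and of the $t$‑expansion, ideally by showing that for $|t|$ below some threshold the series $\sum_j A^{(s)}_j(n)t^{2j}$ converges with bounds uniform in $n$ after normalizing by $A^{(s)}(n)$. This is plausible because the normalized summand $\binom{n}{k}^s/A^{(s)}(n)$ is a probability distribution concentrating at $k=n/2$, and the $t$‑dependent factor is uniformly close to $1$ there; but the details of the saddle‑point estimate — in particular tracking the error terms through the $s$‑th power and verifying they vanish in the limit — are where the real work lies. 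A secondary point requiring care is the precise meaning of "telescoping recurrence based on \eqref{eq:franel:d}'': we should record, following the discussion at the start of Section~\ref{sec:ct}, the mild conditions on $b(n,k)$ (vanishing at the summation endpoints, no pole issues) under which $\sum_k\big(b(n,k+1)-b(n,k)\big)=0$, and check that these persist for the deformed certificate $b(n,k,t)$ for generic small $t$.
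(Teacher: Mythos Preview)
Your overall strategy matches the paper's: establish that the telescoping operator annihilates $A^{(s)}(n,t)$ up to a controlled error, then prove the limit \eqref{eq:franel:lim:sin:intro} with enough uniformity to pass to Taylor coefficients. The limit part is essentially what the paper does in Section~3, and the uniformity obstacle you flag is exactly what the paper handles via the tail-exchange argument.

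However, there is a genuine error in your treatment of the recurrence. You assert that summing the deformed telescoping relation gives $P(n,N)A^{(s)}(n,t)=0$ \emph{for all} $t$, and hence that $P(n,N)A^{(s)}_j(n)=0$ for \emph{every} $j\ge 0$. This is false. Take $s=1$: the minimal telescoping operator is $P=N-2$, while $A^{(1)}(0,t)=1$ and $A^{(1)}(1,t)=2/(1-t^2)$, so
\[
P\,A^{(1)}(n,t)\big|_{n=0}=\frac{2}{1-t^2}-2=\frac{2t^2}{1-t^2}\neq 0.
\]
In particular $A^{(1)}_1(n)$ does \emph{not} satisfy the recurrence, which is consistent with the theorem's restriction $j\le\lfloor (s-1)/2\rfloor$ but contradicts your ``every $j\ge 0$''. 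The reason your boundary argument breaks is that the deformed certificate $b(n,k,t)=R(n,k-t)a(n,k,t)$ does not vanish at the endpoints: once you extend $a(n,k,t)$ analytically (which is what the rational-function identity actually governs), the summand is no longer zero outside $0\le k\le n$ but only $O(t^s)$, coming from $\binom{n}{k-t}=O(t)$ there.

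The paper's fix is to recognise that your deformed summand equals $(\pi t/\sin(\pi t))^s\binom{n}{k-t}^s$, so the deformation is literally the substitution $k\mapsto k-t$ in the rational-function identity \eqref{eq:franel:ct}. Summing the shifted identity over an enlarged range $\alpha\le k<\beta$ yields boundary terms $b(n,\beta-t)-b(n,\alpha-t)$ that are $O(t^s)$, not zero, and the extra summands for $k\notin\{0,\dots,n\}$ are likewise $O(t^s)$. The conclusion is the weaker (and correct) statement $P(n,N)A^{(s)}(n,t)=O(t^s)$, which gives exactly the range of $j$ in the theorem. You will also need the observation that $b(n,\cdot)$ is entire for large $n$; the paper extracts this from the fact that $b(n,t+1)-b(n,t)$ is entire while $b(n,t)/a(n,t)$ is rational. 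Your remark that the same $P$ ``just works'' because the shift ratios change by rational factors is pointing at this substitution, but without identifying it as a pure $k$-shift you cannot conclude that the \emph{same} certificate (shifted) applies, nor control the boundary.
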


\begin{proof}
  First, we show in Theorem~\ref{thm:A:t:rec} that, for large enough $n$,
  $A^{(s)} (n, t)$ satisfies the telescoping recurrence up to terms that are
  $O (t^s)$. Second, we prove in Theorem~\ref{thm:franel:lim:sin} that
  \begin{equation}
    \lim_{n \rightarrow \infty} \frac{A^{(s)} (n, t)}{A^{(s)} (n)} = \left(\frac{\pi t}{\sin (\pi t)} \right)^s, \label{eq:franel:lim:sin:intro}
  \end{equation}
  and that the convergence is locally uniform in $t$ (restricted to the unit
  ball $| t | < 1$). Recall that, if analytic functions $f_n$ converge locally
  uniformly to a function $f$, then $f$ is analytic and the derivatives of
  $f_n$ converge to the corresponding derivatives of $f$. Since the terms on
  the left-hand side of $\eqref{eq:franel:lim:sin:intro}$ are analytic in $t$,
  locally uniform convergence allows us to compare the derivatives on both
  sides, so that \eqref{eq:franel:limits} follows. We note that
  \begin{align*}
    \frac{\pi t}{\sin (\pi t)} & = \sum_{j = 1}^{\infty} \left(2 -
    \frac{1}{2^{2 j - 2}} \right) \zeta (2 j) t^{2 j}\\
    & = \sum_{j = 1}^{\infty} \left(\frac{1}{2^{2 j - 1}} - 1 \right)
    \frac{B_{2 j}}{(2 j) !}  (2 \pi i t)^{2 j},
  \end{align*}
  where the expansion in terms of zeta values makes it transparent that
  $\varphi_j^{(s)}$ is positive, while the rationality of the
  $\varphi_j^{(s)}$ is obvious from the series rewritten in terms of Bernoulli
  numbers.
\end{proof}

\begin{example}
  \label{eg:A:init}Note that $A^{(s)} (n, t)$, as defined in \eqref{eq:A:t},
  has the initial values
  \begin{equation*}
    A^{(s)} (0, t) = 1, \quad A^{(s)} (1, t) = \frac{1}{(1 - t)^s} +
     \frac{1}{(1 + t)^s} = 2 \sum_{j \geq 0} \binom{2 j + s - 1}{2 j}
     t^{2 j} .
  \end{equation*}
  Consequently, for $j \geq 1$, the sequences $A^{(s)}_j (n)$ have the
  initial values $A^{(s)}_j (0) = 0$ and $A^{(s)}_j (1) = 2 \binom{2 j + s -
  1}{2 j}$.
\end{example}

\begin{example}
  \label{eg:franel:limits:zeta2}
  Let us consider the special case $j = 1$ of Theorem~\ref{thm:franel:limits}.
  As noted in Example~\ref{eg:A:init}, we have $A^{(s)}_1 (1) = s (s + 1)$. In
  terms of $B^{(s)} (n) = A^{(s)}_1 (n) / A^{(s)}_1 (1)$, the initial
  values are normalised to $B^{(s)} (0) = 0$ and $B^{(s)} (1) = 1$, and
  the Ap\'ery limit \eqref{eq:franel:limits} takes the form
  \begin{equation}
    \lim_{n \rightarrow \infty} \frac{B^{(s)} (n)}{A^{(s)} (n)} = \frac{1}{s
    (s + 1)} \frac{s}{6} \pi^2 = \frac{\zeta (2)}{s + 1},
    \label{eq:franel:limits:zeta2}
  \end{equation}
  which matches \cite[Conjecture~9]{cs-aperylimits} (we note that this
  conjecture further claims that the sequence $B^{(s)} (n)$ is the unique
  solution of the minimal-order recurrence satisfied by $A^{(s)} (n)$ with the
  above properties). The cases $s = 3$ and $s = 4$ of
  \eqref{eq:franel:limits:zeta2} had been numerically observed by Tom Cusick
  \cite[p.~202]{alf}, while the case $s = 5$ appears as a conjecture in
  \cite[Section~4.1]{avz-apery-limits}. The case $s = 3$ was previously
  proved by Zagier \cite{zagier4} using modular forms.
\end{example}

\begin{remark}
  Dougherty-Bliss and Zeilberger \cite{dbz-apery-limits} explore Ap\'ery
  limits related to those of Example~\ref{eg:franel:limits:zeta2} in a
  different direction. They construct a particular sequence $\tilde{B}^{(s)}
  (n) \in \mathbb{Q}$ such that \eqref{eq:franel:limits:zeta2} holds with
  $B^{(s)} (n)$ replaced by $\tilde{B}^{(s)} (n)$. For fixed $s$, the sequence
  $\tilde{B}^{(s)} (n)$ is $D$-finite, which implies that $A^{(s)} (n)$ and
  $\tilde{B}^{(s)} (n)$ satisfy a common linear recurrence (namely, the
  recurrence obtained from the least common left multiple of the two
  individual recurrence operators), but that recurrence is not minimal unless
  $\tilde{B}^{(s)} (n)$ happens to solve the minimal recurrence satisfied by
  $A^{(s)} (n)$ (that this is not the case is readily checked for small $s$).
  We note that one also obtains the limits \eqref{eq:franel:limits:zeta2} for
  the alternative choice $\tilde{B}^{(s)} (n) = A^{(s)} (n) b (n) / (s + 1)$
  where $b (n)$ is any holonomic sequence such that $b (n) \rightarrow \zeta
  (2)$ as $n \rightarrow \infty$. For instance, one could choose $b (n) =
  \sum_{k = 1}^n \frac{1}{k^2}$ or $b (n) = 3 \sum_{k = 1}^n \frac{1}{k^2}
  \binom{2 k}{k}^{- 1}$, where the latter is due to Ap\'ery \cite{apery,alf}
  and converges at an exponential rate. On the other hand, Dougherty-Bliss and
  Zeilberger \cite{dbz-apery-limits} hope that their construction has
  the potential for better irrationality measures.
\end{remark}

\begin{example}
  Likewise, for the case $j = 2$ of Theorem~\ref{thm:franel:limits}, we have
  $A^{(s)}_2 (1) = s (s + 1) (s + 2) (s + 3) / 12$. If we let $C^{(s)} (n) =
  A^{(s)}_2 (n) / A^{(s)}_2 (1)$, then the initial values are normalised
  to $C^{(s)} (0) = 0$ and $C^{(s)} (1) = 1$, and the Ap\'ery limit
  \eqref{eq:franel:limits} takes the form
  \begin{equation*}
    \lim_{n \rightarrow \infty} \frac{C^{(s)} (n)}{A^{(s)} (n)} = \frac{12}{s
     (s + 1) (s + 2) (s + 3)} \frac{s (5 s + 2)}{360} \pi^4 = \frac{3 (5 s +
     2)}{(s + 1) (s + 2) (s + 3)} \zeta (4),
  \end{equation*}
  which matches \cite[Conjecture~11]{cs-aperylimits}.
\end{example}

\section{Solutions of the telescoping recurrence}\label{sec:ct}

We refer to \cite{aeqb,koutschan-phd,chyzak-abc} for
general introductions to creative telescoping. For our purposes, suppose that
we are interested in a sequence
\begin{equation*}
  A (n) = \sum_{k = \alpha}^{\beta - 1} a (n, k) .
\end{equation*}
If $a (n, k)$ is an appropriate hypergeometric term, then creative telescoping
algorithmically determines operators $P (n, N)$ as well as another
hypergeometric term $b (n, k)$, such that
\begin{equation}
  P (n, N) a (n, k) = b (n, k + 1) - b (n, k) . \label{eq:ct:Pb}
\end{equation}
Moreover, the term $b (n, k)$ as produced by creative telescoping is of the form $b (n,
k) = R (n, k) a (n, k)$ for some rational function $R (n, k)$.
When the hypergeometric term $a(n,k)$ is defined over the ring $\mathbb{Z}$
(and this is specifically the case of our interest here, though the argument below extends to other rings),
that is, when both $a(n+\nobreak1,k)/a(n,k)$ and $a(n,k+1)/a(n,k)$ are quotients of polynomials from $\mathbb{Z}[n,k]$,
we can take $P(n,N) \in \mathbb{Z}[n,N]$ and we have $b(n,k)$ defined over $\mathbb{Z}$.
We note that, given $P (n, N)$ and $R (n, k)$, an
identity like \eqref{eq:ct:Pb} can be verified by dividing both sides by $a
(n, k)$, upon which one obtains an identity between rational functions. For
that reason, $R (n, k)$ is refered to as the \emph{certificate} of the
telescoping relation \eqref{eq:ct:Pb}.

It follows from the telescoping nature of \eqref{eq:ct:Pb} that, after summing
over $k$,
\begin{equation}
  P (n, N) \sum_{k = \alpha}^{\beta - 1} a (n, k) = b (n, \beta) - b (n,
  \alpha), \label{eq:ct:Pb:sum}
\end{equation}
assuming that $b (n, k)$ is finite for the involved values of $n$ and $k$.

For our present purposes, $a (n, k) = \binom{n}{k}^s$. We say that $P (n, N)$
is a \emph{telescoping recurrence operator} for the generalized Franel
numbers $A^{(s)} (n)$ if
\begin{equation}
  P (n, N) \binom{n}{k}^s = b (n, k + 1) - b (n, k), \label{eq:franel:ct}
\end{equation}
where $b (n, k) / a (n, k) = R (n, k)$ is a rational function.
We next show that it follows from \eqref{eq:franel:ct} not only that $P (n, N)
A^{(s)} (n) = 0$ but that the same recurrence is also solved by $A^{(s)} (n,
t)$, as defined in \eqref{eq:A:t}, up to terms of order $t^s$ or higher.
Equivalently, the sequences $A^{(s)}_j (n) \in \mathbb{Q}$, as in
\eqref{eq:A:j}, are solutions for $j \in \{ 0, 1, \ldots, \lfloor (s - 1) / 2
\rfloor \}$.

\begin{theorem}
  \label{thm:A:t:rec}For fixed $s$, suppose that $P (n, N)$ is
a telescoping recurrence operator for the generalized Franel
numbers $A^{(s)} (n)$. Then, for large enough $n$, as $t \to 0$,
  \begin{equation}
    P (n, N) A^{(s)} (n, t) = O (t^s) . \label{eq:A:t:rec}
  \end{equation}
\end{theorem}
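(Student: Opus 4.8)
The plan is to deform the creative‑telescoping identity \eqref{eq:franel:ct} in the parameter $t$ and then sum over $k$. Set $w(n,k,t)\assign\prod_{j=1}^{k}(1-t/j)\prod_{j=1}^{n-k}(1+t/j)$ and $g(n,k,t)\assign\binom nk^{s}\,w(n,k,t)^{-s}$, so that $A^{(s)}(n,t)=\sum_{k=0}^{n}g(n,k,t)$ by \eqref{eq:A:t}. First I would record the two shift ratios $g(n+1,k,t)/g(n,k,t)=\bigl((n+1)/(n+1-k+t)\bigr)^{s}$ and $g(n,k+1,t)/g(n,k,t)=\bigl((n-k+t)/(k+1-t)\bigr)^{s}$, obtained by a short computation with the products, and observe that these are precisely the ratios for $a(n,k)=\binom nk^{s}$ with $k$ replaced by $k-t$; equivalently, $g(n,k,t)=[\Gamma(1-t)\Gamma(1+t)]^{s}\bigl[\Gamma(n+1)\big/(\Gamma(k+1-t)\Gamma(n-k+1+t))\bigr]^{s}$.

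Dividing \eqref{eq:franel:ct} by $a(n,k)$ turns it into an identity of rational functions in $(n,k)$, in which one may substitute $k\mapsto k-t$. Multiplying the result back by $g(n,k,t)$ and using the two shift ratios to recognise each term (so that $g(n,k,t)$ times a product of $n$‑shift ratios is $g(n+i,k,t)$, and $g(n,k,t)$ times the $k$‑shift ratio is $g(n,k+1,t)$), one obtains the \emph{deformed telescoping relation} $P(n,N)\,g(n,k,t)=\widetilde b(n,k+1,t)-\widetilde b(n,k,t)$ with $\widetilde b(n,k,t)\assign R(n,k-t)\,g(n,k,t)$, valid near $t=0$ for each fixed pair of integers $n,k$. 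This substitution $k\mapsto k-t$ in the certificate identity is the one conceptually new ingredient, and it is essentially free.

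Next I would sum the deformed relation over all integers $k$. The $\Gamma$‑form of $g$ supplies two inputs: (i) for an integer $k\notin\{0,1,\dots,n\}$ one of $\Gamma(k+1-t)$, $\Gamma(n-k+1+t)$ is evaluated at a pole of $\Gamma$, whence $g(n,k,t)=O(t^{s})$; and (ii) as $k\to\pm\infty$, the reflection formula gives $g(n,k,t)=O\bigl(|\sin\pi t|^{s}\,|k|^{-s(n+1)}\bigr)$, with the same bound times a fixed power of $k$ for $\widetilde b(n,k,t)$ since $R$ is rational in $k$. By (i), $\sum_{k\in\mathbb Z}g(n,k,t)=A^{(s)}(n,t)+O(t^{s})$; by (ii), provided $n$ is large enough (beyond a bound depending only on $s$), the sums $\sum_{k\in\mathbb Z}g(n+i,k,t)$ for $0\le i\le r$ converge absolutely and $\widetilde b(n,k,t)\to0$ as $k\to\pm\infty$, so summing the deformed relation over $\mathbb Z$ makes the right‑hand side telescope to $0$ and, after interchanging the absolutely convergent sums, $P(n,N)\sum_{k\in\mathbb Z}g(n,k,t)=0$. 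Combining the two facts, $P(n,N)A^{(s)}(n,t)=P(n,N)\sum_{k\in\mathbb Z}g(n,k,t)+P(n,N)\,O(t^{s})=O(t^{s})$, since applying the fixed operator $P(n,N)$ to an $O(t^{s})$ quantity again produces an $O(t^{s})$ quantity; this is \eqref{eq:A:t:rec}.

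The hard part will be the boundary and tail bookkeeping hidden in ``for large enough $n$'': one must know that $R(n,k-t)$ has no pole at the relevant arguments and that $\widetilde b(n,k,t)$ decays fast enough in $k$ for the bi‑infinite telescoping sum to vanish. For this I would invoke the explicit Gosper‑type shape of the denominator of the certificate $R(n,k)$—its poles in $k$ lie among the integers $k\le -1$ and $k\ge n+1$, exactly where $\binom nk^{s}$ has a zero of order $s$—so that $\widetilde b$ remains $O(t^{s})$ there and estimate (ii) becomes available once $n$ exceeds the relevant degree bound. This is also the source of the ``for large enough $n$'' in the underlying identity $P(n,N)A^{(s)}(n)=0$ itself.
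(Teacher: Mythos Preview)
Your core idea coincides with the paper's: both substitute $k\mapsto k-t$ in the rational certificate identity \eqref{eq:franel:ct} and then sum. The paper sums over a finite window $\alpha\le k<\beta$ with $\alpha,\beta$ chosen outside the finitely many integer poles of $R(n,\cdot)$, while you sum over all of $\mathbb{Z}$ and rely on the decay of $\widetilde b(n,k,t)$ as $k\to\pm\infty$; either bookkeeping is viable, and your Gamma-function identification $g(n,k,t)=\bigl(\pi t/\sin\pi t\bigr)^{s}\binom{n}{k-t}^{s}$ is exactly the paper's \eqref{eq:binom:t:prod}.

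There is, however, a genuine gap in your last paragraph. The theorem concerns an \emph{arbitrary} telescoping operator $P(n,N)$ with certificate $R(n,k)$, yet you justify the regularity of $\widetilde b(n,k,t)=R(n,k-t)\,g(n,k,t)$ by invoking an ``explicit Gosper-type shape'' of the denominator of $R$. You cannot assume this: the paper only \emph{observes} empirically (Remark~\ref{rk:franel:recs}) that the denominator equals $(n-k+1)_m^{s}$ for the minimal certificate when $s\le 20$; nothing is proved for general $s$, let alone for an arbitrary certificate. The paper sidesteps this with a short, assumption-free argument: for large enough $n$ the function $b(n,t)=R(n,t)\binom{n}{t}^{s}$ is \emph{entire} in $t$, because $b(n,t+1)-b(n,t)$ is entire (it equals the left side of \eqref{eq:franel:ct:t}, a finite combination of entire functions) while $b(n,t)$, being rational times entire, has at most finitely many poles---and a single pole would force poles at all its integer translates. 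Once $b(n,\cdot)$ is entire, your $\widetilde b(n,k,t)=\bigl(\pi t/\sin\pi t\bigr)^{s}b(n,k-t)$ is analytic near $t=0$ for every integer $k$, and then your bi-infinite telescoping goes through cleanly (your decay estimate uses only that $R$ is rational in $k$, which is legitimate). Replace the Gosper-shape assumption by this entireness argument and your proof is complete.
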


\begin{proof}
  Using the reflection formula
  \begin{equation*}
    \Gamma (t) \Gamma (1 - t) = \frac{\pi}{\sin (\pi t)},
  \end{equation*}
  we find that, for $n \in \mathbb{Z}_{\geq 0}$,
  \begin{align}
    \binom{n}{- t} &= \frac{\Gamma (n + 1)}{\Gamma (n + t + 1) \Gamma (1 - t)}
     = \frac{\sin (\pi t)}{\pi} \frac{\Gamma (n + 1) \Gamma (t)}{\Gamma (n + t + 1)} \nonumber
     \\&= \frac{\sin (\pi t)}{\pi}  \frac{n!}{t (t + 1) \cdots (t + n)} .
     \label{eq:binom:neg}
  \end{align}
  Consequently, for $k \in \mathbb{Z}$ such that $0 \leq k \leq n$,
  \begin{align}
    \binom{n}{k - t} & = \frac{\sin (\pi t)}{\pi t}  \frac{(- 1)^k n!}{(t -
    k) \cdots (t - 1) (t + 1) \cdots (t + n - k)} \nonumber\\
    & = \frac{\sin (\pi t)}{\pi t} \binom{n}{k} \left[ \prod_{j = 1}^k
    \left(1 - \frac{t}{j} \right) \prod_{j = 1}^{n - k} \left(1 +
    \frac{t}{j} \right) \right]^{- 1},  \label{eq:binom:t:prod}
  \end{align}
  where we used $\sin (\pi (t - k)) = (- 1)^k \sin (\pi t)$. If $\alpha$ and
  $\beta$ are integers such that $\alpha \leq 0$ and $\beta > n$, we
  therefore have
  \begin{equation*}
    A^{(s)} (n, t) = \left(\frac{\pi t}{\sin (\pi t)} \right)^s \sum_{k =
     0}^n \binom{n}{k - t}^s = \left(\frac{\pi t}{\sin (\pi t)} \right)^s
     \sum_{k = \alpha}^{\beta - 1} \binom{n}{k - t}^s + O (t^s),
  \end{equation*}
  where the first equality is a consequence of \eqref{eq:binom:t:prod}, while
  the second follows from the added binomial coefficients being $O (t)$ as $t
  \rightarrow 0$. The claim \eqref{eq:A:t:rec} therefore follows if we can
  show that
  \begin{equation*}
    P (n, N) \sum_{k = \alpha}^{\beta - 1} \binom{n}{k - t}^s = O (t^s)
  \end{equation*}
  for large enough $n$.
  
  Since \eqref{eq:franel:ct} after dividing by $\binom{n}{k}^s$ is a rational-function
  identity in $n$ and $k$, the relation \eqref{eq:franel:ct} continues to hold
  if we replace $k$ by $k - t$, for an indeterminate $t$, resulting in
  \begin{equation}
    P (n, N) \binom{n}{k - t}^s = b (n, k + 1 - t) - b (n, k - t) .
    \label{eq:franel:ct:t}
  \end{equation}
  Because $b (n, t) / a (n, t) = R (n, t)$ is a rational function,
  while $a (n, t)$ is an entire function in $t$ for each $n \geq 0$,
  we conclude that, for each large enough $n$ (so that the denominator of $R (n,
  t)$ cannot vanish for all $t$), $b (n, t)$ can have at most finitely many
  poles as a function of $t$. However, for fixed such $n$, the function
  $b (n, t + 1) - b (n, t)$ is entire in $t$ (since the the left-hand side
  in \eqref{eq:franel:ct:t} is a linear combination of entire functions),
  hence $b (n, t)$
  cannot have poles at all. In particular, for large enough $n$, $b (n, t)$ is itself an
  entire function in $t$, and we can then sum \eqref{eq:franel:ct:t} over
  $k$ to obtain
  \begin{equation}
    P (n, N) \sum_{k = \alpha}^{\beta - 1} \binom{n}{k - t}^s = b (n, \beta -
    t) - b (n, \alpha - t) . \label{eq:franel:ct:t:sum}
  \end{equation}
  It therefore remains to show that $b (n, \alpha - t)$ and $b (n, \beta - t)$
  are each $O (t^s)$ as $t\to0$ for some integral $\alpha \leq 0$ and $\beta > n$ of our
  choosing. To that end, fix $n$ and note that, if $\alpha \leq 0$ is an integer, then
  \begin{equation*}
    b (n, \alpha - t) = R (n, \alpha - t) \binom{n}{\alpha - t}^s
  \end{equation*}
  is $O (t^s)$ as $t\to0$, because the binomial coefficient
  \begin{equation*}
  \binom{n}{\alpha - t} = \frac{(-1)^{\alpha+1}}{\binom{n-\alpha}{n}}\,t + O(t^2)
  \end{equation*}
  is $O(t)$, provided that the rational function $r (t) \assign R (n, t)$ (which is
  well-defined for large enough $n$) does not have a pole at $t = \alpha$.
  This is necessarily the case for $\alpha \leq 0$ of large enough absolute value because
  $r (t)$ can have at most finitely many poles. The same argument applies to
  show that $b (n, \beta - t)$ is $O (t^s)$ for large enough integral $\beta > n$.
\end{proof}

\begin{remark}
  \label{rk:A:t:rec:n0}
  The proof above shows that Theorem~\ref{thm:A:t:rec} is true for all $n \in
  \mathbb{Z}_{\geq 0}$ if the denominator of the rational certificate $R
  (n, k)$ has no factor of the form $n - n_0$ for some $n_0 \in
  \mathbb{Z}_{\geq 0}$ (so that $b (n, t)$ is an entire function in $t$ for
  all $n \in \mathbb{Z}_{\geq 0}$).  The computations
  mentioned in Remark~\ref{rk:franel:recs} below show that, for $s \leq
  20$, the minimal recurrence is telescoping and that, up to a
  constant multiple, the denominator of $R (n, k)$ is $(n - k + 1)_m^s$. It is
  natural to expect that these observations continue to be true for all $s$.
\end{remark}

\begin{remark}
  With \eqref{eq:binom:t:prod} in mind, we note that the rational function
  \begin{equation*}
    \frac{n!}{t (t + 1) \cdots (t + n)} = \sum_{k = 0}^n \frac{(- 1)^k
     \binom{n}{k}}{t + k} = \frac{\pi}{\sin (\pi t)} \binom{n}{- t},
  \end{equation*}
  and its powers play a role of building bricks in constructions of
  $\mathbb{Q}$-linear forms in zeta values
  \cite{nesterenko-zeta,zudilin-arithmetic-odd}.
\end{remark}

\section{Proof of the Ap\'ery limits}

This section is devoted to a proof of the following result which, together
with Theorem~\ref{thm:A:t:rec}, establishes the Ap\'ery limits associated to
the generalized Franel numbers $A^{(s)} (n)$ as claimed in
Theorem~\ref{thm:franel:limits}.

\begin{theorem}
  \label{thm:franel:lim:sin}For any $s \in \mathbb{Z}_{> 0}$, we have
  \begin{equation}
    \lim_{n \rightarrow \infty} \frac{A^{(s)} (n, t)}{A^{(s)} (n)} = \left(\frac{\pi t}{\sin (\pi t)} \right)^s, \label{eq:franel:lim:sin}
  \end{equation}
  where the convergence is locally uniform in $t$ \textup(restricted to the unit ball
  $| t | < 1$\textup).
\end{theorem}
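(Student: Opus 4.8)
The plan is to work directly with the representation \eqref{eq:A:t}. Abbreviating
\[
  w_k(n,t) := \Bigl(\prod_{j=1}^k\Bigl(1-\tfrac tj\Bigr)\prod_{j=1}^{n-k}\Bigl(1+\tfrac tj\Bigr)\Bigr)^{-s},
\]
it reads $A^{(s)}(n,t)=\sum_{k=0}^n\binom nk^s w_k(n,t)$, while $A^{(s)}(n)=\sum_{k=0}^n\binom nk^s$; thus $A^{(s)}(n,t)/A^{(s)}(n)$ is the average of the weights $w_k(n,t)$ against the probabilities $p_k(n):=\binom nk^s/A^{(s)}(n)$. Since the $p_k(n)$ concentrate near $k=n/2$, and since I claim $w_k(n,t)\to(\pi t/\sin(\pi t))^s$ whenever $k,n-k\to\infty$ with $k/n\to 1/2$, one expects this average to converge to $(\pi t/\sin(\pi t))^s$. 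I would make this precise by splitting $\{0,\dots,n\}$ into a central block $|k-n/2|\le n^{\theta}$, for a fixed $\theta\in(1/2,1)$, and its complementary tail.

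On the central block I would compute the asymptotics of $w_k(n,t)$. Writing $\prod_{j=1}^m(1-a/j)=\Gamma(m+1-a)/(\Gamma(1-a)\,m!)$, invoking the uniform Stirling estimate $\Gamma(m+1+a)/\Gamma(m+1)=m^a(1+O(1/m))$ for $a$ in a fixed disc, and using $\Gamma(1-t)\Gamma(1+t)=\pi t/\sin(\pi t)$, one obtains, uniformly for $|k-n/2|\le n^\theta$ and $|t|\le\rho<1$,
\[
  w_k(n,t)=\Bigl(\frac{\pi t}{\sin(\pi t)}\Bigr)^{s}\Bigl(\frac{k}{n-k}\Bigr)^{st}\bigl(1+O_\rho(1/n)\bigr)=\Bigl(\frac{\pi t}{\sin(\pi t)}\Bigr)^{s}\bigl(1+O_\rho(n^{\theta-1})\bigr),
\]
the last step because $\log\frac{n-k}{k}=O(n^{\theta-1})$ on this block. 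Hence the contribution of the central block to $A^{(s)}(n,t)/A^{(s)}(n)$ equals $(\pi t/\sin(\pi t))^s$ times $(1+O_\rho(n^{\theta-1}))$ times the $p_k(n)$-mass of the block, and the latter is $1-o(1)$ by concentration.

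For the tail I would use a crude global bound on $w_k$: since $|1\pm t/j|\ge 1-|t|/j>0$ for $|t|<1$, and since $\prod_{j=1}^n(1-a/j)\,n^{a}$ is bounded below by a positive constant depending only on $\rho$ for $0\le a\le\rho$ (compare $\log(1-a/j)$ with $-a/j$ and sum), one gets $|w_k(n,t)|\le C(\rho)\,n^{2s\rho}$ for all $k\in\{0,\dots,n\}$ and all $|t|\le\rho$. This is the crux of the argument: the weights are \emph{not} uniformly bounded --- near $k\approx n$ they grow like a fixed power of $n$ --- so there is no constant dominator and a naive passage to the limit fails; one must instead play this polynomial growth off against an exponentially small tail mass. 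The latter follows from the standard estimate $\binom{n}{\lfloor n/2\rfloor+d}\le\binom{n}{\lfloor n/2\rfloor}\,e^{-cd^2/n}$: summing $\binom nk^s$ over $|k-n/2|>n^\theta$ and dividing by $A^{(s)}(n)\ge\binom{n}{\lfloor n/2\rfloor}^{s}$ bounds the tail mass by $O(n^{1-\theta})\,e^{-cs\,n^{2\theta-1}(1+o(1))}$, which overwhelms the factor $n^{2s\rho}$ because $2\theta-1>0$. Hence the tail contributes $o(1)$ to $A^{(s)}(n,t)/A^{(s)}(n)$, uniformly for $|t|\le\rho$.

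Adding the central and tail contributions yields \eqref{eq:franel:lim:sin}, with convergence uniform on each disc $|t|\le\rho<1$ and therefore locally uniform on $|t|<1$. I expect the only genuinely delicate point to be the one just flagged, namely the control of the edge weights $w_k(n,t)$ where the bulk asymptotics break down; the bulk Stirling/Gamma asymptotics and the Chernoff-type concentration of $\binom nk^s$ are routine, the one thing to watch throughout being that every $O$-constant stays uniform for $t$ in a compact subset of the unit disc.
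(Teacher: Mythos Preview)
Your proposal is correct and follows essentially the same route as the paper: both split the sum at $|k-n/2|\le n^{1/2+\varepsilon}$ (your $n^\theta$, with $\theta=\tfrac12+\varepsilon$), use Gamma/Stirling asymptotics on the central block to identify the limit $(\pi t/\sin\pi t)^s$, and kill the tail by playing a crude polynomial bound on the edge weights $w_k$ against the sub-Gaussian decay of $\binom{n}{k}^s$. The only cosmetic difference is that you organize the argument as a weighted average $\sum_k p_k(n)\,w_k(n,t)$, whereas the paper computes the asymptotics of numerator and denominator separately via the tail-exchange method of Graham--Knuth--Patashnik; your packaging is slightly more economical but the substance is identical.
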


That is, we wish to show that
\begin{equation}
  \lim_{n \rightarrow \infty} \frac{\sum_{k = 0}^n \binom{n}{k}^s \left[
  \prod_{j = 1}^k \left(1 - \frac{t}{j} \right) \prod_{j = 1}^{n - k} \left(1 + \frac{t}{j} \right) \right]^{- s}}{\sum_{k = 0}^n \binom{n}{k}^s} =
  \left(\frac{\pi t}{\sin (\pi t)} \right)^s, \label{eq:franel:lim:sin:expl}
\end{equation}
and that the convergence is locally uniform in $t$.

The asymptotics for sums of powers of binomials are known to be, for fixed
$s$,
\begin{equation}
  \sum_{k = 0}^n \binom{n}{k}^s = \frac{2^{n s}}{\sqrt{s (\pi n / 2)^{s - 1}}}
  \left(1 + O \left(\frac{1}{n} \right) \right) . \label{eq:franel:asy}
\end{equation}
For instance, a more precise estimate with additional terms (and which applies
to more general binomial sums) is derived by McIntosh
\cite{mcintosh-binom-asy}. Slightly weaker estimates are derived in
\cite[p.~486--489]{gkp-concrete}, with full details provided in the case $s
= 1$, as well as in \cite{fl-binom-asy}. In each case, the analysis rests on
the fact that the binomial sum is dominated by those terms with $k \approx n /
2$. However, the precise choice of cut-off for the dominant part of the sum
differs between the various approaches. In \cite{mcintosh-binom-asy} the
dominant terms are those corresponding to $k$ satisfying $\left| k -
\frac{n}{2} \right| \leq \varepsilon n$ for suitable $\varepsilon > 0$,
while in \cite{gkp-concrete} this condition is replaced with $\left| k -
\frac{n}{2} \right| \leq \varepsilon n^{1 / 2}$. On the other hand, in
\cite{gkp-concrete}, one restricts to those $k$ in the set
\begin{equation*}
  K_{n, \varepsilon} = \left\{ k \in \mathbb{Z} :
   \left| k - \frac{n}{2} \right| \leq n^{1 / 2 + \varepsilon} \right\} .
\end{equation*}
It is this latter choice that is most suitable for our present purposes.

Naturally, our strategy to establish the limit \eqref{eq:franel:lim:sin:expl}
is to exploit the fact that the sums on the left-hand side are concentrated
around $k \approx n / 2$. For those $k$ and large $n$, we have
\begin{equation*}
  \prod_{j = 1}^k \left(1 - \frac{t}{j} \right) \prod_{j = 1}^{n - k} \left(1 + \frac{t}{j} \right)
  \approx \prod_{j =
   1}^{\infty} \left(1 - \frac{t}{j} \right) \left(1 + \frac{t}{j} \right) =
   \frac{\sin (\pi t)}{\pi t} .
\end{equation*}
On the other hand, this is not true if $k$ is not sufficiently close to $n /
2$; however, we will show that the contribution from these $k$ is overall
negligible. To make this precise, we begin by observing the following desired
behaviour for $k \in K_{n, \varepsilon}$.

\begin{lemma}
  \label{lem:prod2}Fix $\varepsilon \in [0, 1 / 2)$ and $\tau > 0$. Then, for
  all integers $n \geq 0$, all $k \in K_{n, \varepsilon}$ and all $| t |
  \leq \tau$, we have
  \begin{equation*}
    \prod_{j = 1}^k \left(1 - \frac{t}{j} \right) \prod_{j = 1}^{n - k}
     \left(1 + \frac{t}{j} \right) = \frac{\sin (\pi t)}{\pi t} \left(1 + O
     \left(\frac{1}{n^{1 / 2 - \varepsilon}} \right) \right)
  \end{equation*}
  where the implied constant depends on $\varepsilon$ and $\tau$ \textup(but not on
  $t$ or $k$\textup).
\end{lemma}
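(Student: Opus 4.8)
The plan is to take logarithms and estimate the resulting sum against the corresponding infinite series. Write
\[
  L(n,k,t) \assign \log\prod_{j=1}^k\Bigl(1-\frac{t}{j}\Bigr) + \log\prod_{j=1}^{n-k}\Bigl(1+\frac{t}{j}\Bigr)
  = \sum_{j=1}^k \log\Bigl(1-\frac{t}{j}\Bigr) + \sum_{j=1}^{n-k}\log\Bigl(1+\frac{t}{j}\Bigr),
\]
and compare it with the limit value $L_\infty(t) \assign \sum_{j\ge 1}\bigl[\log(1-t/j)+\log(1+t/j)\bigr] = \log\frac{\sin(\pi t)}{\pi t}$, which is the standard Weierstrass product for $\sin$. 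Then $L(n,k,t) - L_\infty(t)$ splits into two tails: the tail of $\sum_{j>k}\log(1-t/j)$ and the tail of $\sum_{j>n-k}\log(1+t/j)$. Since $\log(1\pm t/j) = \pm t/j + O(1/j^2)$ uniformly for $|t|\le\tau$ and $j$ bounded below, the two leading linear pieces partially cancel: the tails contribute $-t\sum_{j>k}1/j + t\sum_{j>n-k}1/j + O\bigl(\sum_{j>\min(k,n-k)}1/j^2\bigr)$. The harmonic-tail difference is $t\bigl(H_{n-k}-H_k\bigr)$ up to lower order, and for $k\in K_{n,\varepsilon}$ we have $|k-(n-k)| = 2|k-n/2|\le 2n^{1/2+\varepsilon}$ with both $k$ and $n-k$ of size $\sim n/2$, so $|H_{n-k}-H_k| = O(n^{-1/2+\varepsilon})$ by the mean value theorem applied to $H_x\sim\log x$. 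The quadratic tail is $O(1/\min(k,n-k)) = O(1/n)$, which is even smaller. Hence $L(n,k,t)-L_\infty(t) = O(n^{-1/2+\varepsilon})$ uniformly in $k\in K_{n,\varepsilon}$ and $|t|\le\tau$, and exponentiating (using that $\exp(x) = 1+O(x)$ for $x$ in a bounded set, which $x=L-L_\infty$ is) yields the claimed multiplicative error.

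One point that requires care at the start: for $k\in K_{n,\varepsilon}$ with $n$ small, or for $|t|\le\tau$ with $\tau\ge 1$, individual factors $1-t/j$ can vanish or be negative, so the logarithms above are not literally defined. I would handle this by first disposing of small $n$ (finitely many cases, where the stated $O(\cdot)$ bound is vacuous after enlarging the implied constant) and by working with $\log$ of absolute values or, cleaner, by noting that for $k\in K_{n,\varepsilon}$ and $n$ large the relevant indices $j$ in the \emph{tails} satisfy $j>\min(k,n-k)\gtrsim n/2 \gg \tau$, so only the tail sums — which is all the comparison with $L_\infty(t)$ actually involves — are taken over $j$ with $|t/j|<1/2$, where $\log(1-t/j)$ and $\log(1+t/j)$ are unambiguously defined and satisfy $|\log(1\pm t/j)|\le 2|t|/j$. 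The finitely many "bad" factors with small $j$ appear identically in $L(n,k,t)$ and in $L_\infty(t)$ and cancel exactly in the difference, so they never enter the estimate.

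The main obstacle is simply making the cancellation in the harmonic tails rigorous and uniform: one must check that the error in replacing $\sum_{j>m}\log(1\pm t/j)$ by $\mp t\sum_{j>m}1/j$ is $O(\tau^2/m)$ with a constant independent of $t$, and that the difference of the two truncation points $k$ and $n-k$, which differ by $O(n^{1/2+\varepsilon})$ while both being $\Theta(n)$, produces exactly the advertised $n^{-1/2+\varepsilon}$ and not something larger. Everything else — splitting the product, the Weierstrass identity, exponentiating a small quantity — is routine. I would also remark that the case $\varepsilon=0$, $t$ real with $|t|<1$, is all that is strictly needed later, but the stated uniformity over $|t|\le\tau$ and $\varepsilon\in[0,1/2)$ costs nothing extra in this argument.
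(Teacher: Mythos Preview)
Your approach is correct and genuinely different from the paper's. The paper rewrites the product via gamma functions as
\[
\frac{\sin(\pi t)}{\pi t}\cdot\frac{(k-t)!}{k!}\cdot\frac{(n-k+t)!}{(n-k)!}
\]
and then applies Stirling's formula in the form $(m+t)!/m! = m^t(1+O(1/m))$, reducing the claim to $(n/k-1)^t = 1 + O(n^{-1/2+\varepsilon})$, which follows immediately from $k = n/2 + O(n^{1/2+\varepsilon})$. You instead work directly with the logarithm, invoking the Weierstrass product and the cancellation of harmonic tails; this is more elementary (no Stirling needed) and makes the origin of the exponent $-1/2+\varepsilon$---the gap $|k-(n-k)|\le 2n^{1/2+\varepsilon}$ between the two truncation points---very transparent. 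The paper's route is shorter once Stirling is in hand and sidesteps any convergence subtleties.

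One expositional caution: your two ``tails'' $\sum_{j>k}\log(1-t/j)$ and $\sum_{j>n-k}\log(1+t/j)$ are each divergent (they behave like $\mp t\log$), so the decomposition $L-L_\infty = -\text{tail}_1-\text{tail}_2$ is only formal. You evidently see the cancellation (``the two leading linear pieces partially cancel''), but to make it rigorous you should pair terms first---write $L_\infty = \sum_j\log(1-t^2/j^2)$ and split $L$ as $\sum_{j\le\min(k,n-k)}\log(1-t^2/j^2)$ plus a finite block of $O(n^{1/2+\varepsilon})$ terms each of size $O(1/n)$---or truncate at $N$ and let $N\to\infty$ after combining the linear pieces. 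This is routine. (There is also a harmless sign slip in your display of the linear tails; the end result $t(H_{n-k}-H_k)$ is correct.) Finally, your aside that $\varepsilon=0$ suffices later is not quite right: the subsequent tail-exchange argument needs $\varepsilon>0$ to control the contribution from $k\notin K_{n,\varepsilon}$.
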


\begin{proof}
  To begin with, note that
  
  \begin{equation*}
    \prod_{j = 1}^n \left(1 + \frac{t}{j} \right) = \frac{(n + t) !}{n! t!}
     .
  \end{equation*}
  
  In light of the classical
  \begin{equation*}
    \frac{1}{\Gamma (1 + t) \Gamma (1 - t)} = \frac{\sin (\pi t)}{\pi t},
  \end{equation*}
  we therefore need to show that
  \begin{equation}
    \frac{(k - t) !}{k!} \frac{(n - k + t) !}{(n - k) !} = 1 + O \left(\frac{1}{n^{1 / 2 - \varepsilon}} \right) . \label{eq:prod2:1}
  \end{equation}
  To this end, recall Stirling's formula in its logarithmic form, namely,
  \begin{equation}
    \ln (n!) = n \ln (n) - n + \frac{\ln (n)}{2} + \frac{1}{2} \ln (2 \pi) + O
    \left(\frac{1}{n} \right) . \label{eq:stirling:log}
  \end{equation}
  With the assumption that $t = O (1)$, we deduce from
  \eqref{eq:stirling:log} that
  \begin{equation}
    \frac{(n + t) !}{n!} = n^t \left(1 + O \left(\frac{1}{n} \right) \right)
    \label{eq:prod1:nt}
  \end{equation}
  and, therefore,
  \begin{equation*}
    \frac{(k - t) !}{k!} \frac{(n - k + t) !}{(n - k) !} = \left(\frac{n}{k}
     - 1 \right)^t \left(1 + O \left(\frac{1}{k} \right) + O \left(\frac{1}{n - k} \right) \right) .
  \end{equation*}
  The assumption $k \in K_{n, \varepsilon}$ implies that $k = \frac{n}{2} + O
  (n^{1 / 2 + \varepsilon})$ and, in particular,
  \begin{equation*}
    \frac{n}{k} - 1 = \frac{n}{\frac{n}{2} + O (n^{1 / 2 + \varepsilon})} - 1
     = 1 + O \left(\frac{1}{n^{1 / 2 - \varepsilon}} \right),
  \end{equation*}
  leading us to the claimed relation \eqref{eq:prod2:1}.
\end{proof}

On the other hand, for $k \notin K_{n, \varepsilon}$, the products can be
bounded using the following simple observation.

\begin{lemma}
  \label{lem:prod:tail:bound}Fix $\tau \in (0, 1)$. For all integers $n > 0$
  and all $| t | \leq \tau$, we have
  \begin{equation*}
    \left| \prod_{j = 1}^n \left(1 + \frac{t}{j} \right) \right|^{- 1} = O
     (n^{\tau})
  \end{equation*}
  where the implied constant depends on $\tau$ \textup(but not on $t$\textup).
\end{lemma}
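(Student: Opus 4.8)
The plan is to discard the dependence on $t$ by a crude pointwise bound and then estimate a single $t$-free product. For any $t$ with $|t|\le\tau$ and any integer $j\ge 1$, the triangle inequality gives
\begin{equation*}
  \left| 1+\frac{t}{j} \right| \ge 1-\frac{|t|}{j} \ge 1-\frac{\tau}{j},
\end{equation*}
and since $0<\tau<1\le j$ each factor $1-\tau/j$ lies in the open interval $(0,1)$; in particular all of them are \emph{positive}. Multiplying over $j=1,\dots,n$ yields
\begin{equation*}
  \left| \prod_{j=1}^n \left(1+\frac{t}{j}\right) \right|
  = \prod_{j=1}^n \left| 1+\frac{t}{j} \right|
  \ge \prod_{j=1}^n \left(1-\frac{\tau}{j}\right) > 0,
\end{equation*}
so it suffices to prove a lower bound $\prod_{j=1}^n \left(1-\tau/j\right) \ge c_\tau\, n^{-\tau}$ for some constant $c_\tau>0$ depending only on $\tau$; the implied constant in the statement is then $c_\tau^{-1}$, which is automatically uniform in $t$.

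For that lower bound I would proceed in either of two ways. The quick route is to recognise $\prod_{j=1}^n(1-\tau/j)=\dfrac{(n-\tau)!}{n!\,(-\tau)!}$ and apply \eqref{eq:prod1:nt} with the parameter $-\tau$ (a fixed real number, hence $O(1)$), which gives $\prod_{j=1}^n(1-\tau/j)=\dfrac{1}{\Gamma(1-\tau)}\,n^{-\tau}\bigl(1+O(1/n)\bigr)$; this is at least $\tfrac12\Gamma(1-\tau)^{-1}n^{-\tau}$ for large $n$, and after shrinking the constant to account for the finitely many small $n$ (each product being a positive number) one obtains the bound for every $n\ge1$. A self-contained alternative is to use $\ln(1-x)\ge -x-x^2$ for $x\in[0,1/2]$ (valid since $x\mapsto\ln(1-x)+x+x^2$ has nonnegative derivative there) on the terms with $j\ge 2$, for which $\tau/j\le\tau/2<1/2$, and to treat $j=1$ directly:
\begin{equation*}
  \sum_{j=1}^n \ln\!\left(1-\frac{\tau}{j}\right)
  \ge \ln(1-\tau) - \tau\sum_{j=2}^n\frac1j - \tau^2\sum_{j=2}^\infty\frac1{j^2}
  \ge \ln(1-\tau) - \tau^2 - \tau\ln n,
\end{equation*}
using $\sum_{j=2}^n 1/j\le\ln n$ and $\sum_{j=2}^\infty 1/j^2<1$. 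Hence $\prod_{j=1}^n(1-\tau/j)\ge (1-\tau)e^{-\tau^2}n^{-\tau}$ for all $n\ge1$.

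Combining the two displays yields $\bigl|\prod_{j=1}^n(1+t/j)\bigr|^{-1}\le c_\tau^{-1}n^{\tau}=O(n^\tau)$ with $c_\tau=(1-\tau)e^{-\tau^2}$, which depends only on $\tau$. There is no real obstacle here: the only points needing care are that the hypothesis $\tau<1$ is precisely what keeps every factor $1-\tau/j$ positive (so that passing to reciprocals is legitimate), and that one chooses the final constant small enough to cover all $n\ge1$ rather than only large $n$.
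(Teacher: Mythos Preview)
Your proof is correct and your ``quick route'' is exactly the paper's argument: bound $|1+t/j|\ge 1-\tau/j$ uniformly in $t$, then use \eqref{eq:prod1:nt} (equivalently, Gamma asymptotics) to get $\prod_{j=1}^n(1-\tau/j)^{-1}=\Gamma(1-\tau)\,n^\tau(1+O(1/n))$. Your self-contained logarithmic estimate is a nice bonus that avoids the appeal to Stirling, and your explicit attention to positivity of the factors and to small $n$ fills in details the paper leaves implicit.
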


\begin{proof}
  Because $| t | \leq \tau < 1$, we have
  \begin{equation*}
    \left| \prod_{j = 1}^n \left(1 + \frac{t}{j} \right) \right|^{- 1}
     \leq \prod_{j = 1}^n \left(1 - \frac{\tau}{j} \right)^{- 1}
  \end{equation*}
  and it can be deduced from \eqref{eq:prod1:nt} that
  \begin{equation*}
    \prod_{j = 1}^n \left(1 - \frac{\tau}{j} \right)^{- 1} = \Gamma (1 -
     \tau) n^{\tau} \left(1 + O \left(\frac{1}{n} \right) \right) .
\qedhere
  \end{equation*}
\end{proof}

In particular, for $0 < k \leq n$ and $| t | \leq \tau < 1$, we
conclude from Lemma~\ref{lem:prod:tail:bound} the crude bound
\begin{equation}
  \left[ \prod_{j = 1}^k \left(1 - \frac{t}{j} \right) \prod_{j = 1}^{n - k}
  \left(1 + \frac{t}{j} \right) \right]^{- s} = O (n^{2 s}),
  \label{eq:prod2:bound}
\end{equation}
which could be easily strenghtened but which suffices for our purposes.

We now write
\begin{equation*}
  a_{s, t} (k, n) = \binom{n}{k}^s \left[ \prod_{j = 1}^k \left(1 -
   \frac{t}{j} \right) \prod_{j = 1}^{n - k} \left(1 + \frac{t}{j} \right)
   \right]^{- s}
\end{equation*}
and follow the approach in \cite[p.~486--489]{gkp-concrete} to prove the
following.

\begin{lemma}
  Fix $s > 0$, $\tau \in (0, 1)$ and $\varepsilon \in (0, 1 / 6)$. Then, for
  all integers $n \geq 0$ and all $k \in K_{n, \varepsilon}$ and all $t$
  such that $| t | \leq \tau$, we have
  \begin{equation}
    \sum_{k = 0}^n a_{s, t} (k, n) = \frac{2^{n s}}{\sqrt{s (\pi n / 2)^{s -
    1}}} \left[ \frac{\pi t}{\sin (\pi t)} \right]^s \left(1 + O \left(\frac{1}{n^{1 / 2 - 3 \varepsilon}} \right) \right) \label{eq:a:sum}
  \end{equation}
  where the implied constant depends on $s$, $\varepsilon$ and $\tau$ \textup(but not
  on $t$\textup).
\end{lemma}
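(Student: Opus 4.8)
The plan is to combine the two preceding lemmas by splitting the sum $\sum_{k=0}^n a_{s,t}(k,n)$ into the dominant part over $k \in K_{n,\varepsilon}$ and the tail over $k \notin K_{n,\varepsilon}$. For the tail, I would use the crude bound \eqref{eq:prod2:bound}, which shows each term is $O(n^{2s}) \binom{n}{k}^s$; since the sum $\sum_{k \notin K_{n,\varepsilon}} \binom{n}{k}^s$ is itself super-polynomially smaller than $2^{ns}/\sqrt{s(\pi n/2)^{s-1}}$ (this is exactly the concentration fact that underlies \eqref{eq:franel:asy}, and is what is established carefully in \cite[p.~486--489]{gkp-concrete}), the whole tail is absorbed into the error term $O(n^{-1/2+3\varepsilon})$ with room to spare. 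More precisely, one needs $\sum_{|k-n/2|>n^{1/2+\varepsilon}} \binom{n}{k}^s = O\!\left(2^{ns} n^{-M}\right)$ for every fixed $M$, which follows from a Gaussian-tail estimate for a single binomial raised to the $s$-th power.

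For the dominant part, on $K_{n,\varepsilon}$ I would apply Lemma~\ref{lem:prod2} to factor out the product: each $a_{s,t}(k,n)$ equals $\binom{n}{k}^s \bigl(\tfrac{\pi t}{\sin(\pi t)}\bigr)^s \bigl(1 + O(n^{-1/2+\varepsilon})\bigr)^{-s}$. The key point is that the implied constant in Lemma~\ref{lem:prod2} is uniform in $k \in K_{n,\varepsilon}$ and in $|t| \le \tau$, so this factor $\bigl(\tfrac{\pi t}{\sin(\pi t)}\bigr)^s\bigl(1 + O(n^{-1/2+\varepsilon})\bigr)$ can be pulled out of the sum over $k \in K_{n,\varepsilon}$. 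This leaves
\[
  \sum_{k \in K_{n,\varepsilon}} a_{s,t}(k,n) = \Bigl(\tfrac{\pi t}{\sin(\pi t)}\Bigr)^s \bigl(1 + O(n^{-1/2+\varepsilon})\bigr) \sum_{k \in K_{n,\varepsilon}} \binom{n}{k}^s,
\]
and then I would invoke the known asymptotic that $\sum_{k \in K_{n,\varepsilon}} \binom{n}{k}^s = \tfrac{2^{ns}}{\sqrt{s(\pi n/2)^{s-1}}}\bigl(1 + O(n^{-1/2+3\varepsilon})\bigr)$, which is precisely the content of the Gaussian-approximation argument in \cite[p.~486--489]{gkp-concrete} (the restriction $\varepsilon < 1/6$ guarantees that the cubic correction term $k^3/n^2 = O(n^{3\varepsilon-1/2})$ in the local expansion of $\ln\binom{n}{k}$ is negligible). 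Adding back the tail from the previous paragraph and collecting the error terms — all of which are $O(n^{-1/2+3\varepsilon})$ or smaller — yields \eqref{eq:a:sum}.

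The main obstacle is keeping the $t$-uniformity of the error terms explicit throughout: the hypotheses of both Lemmas~\ref{lem:prod2} and~\ref{lem:prod:tail:bound} were stated with implied constants independent of $t$ (for $|t| \le \tau$), and the factor $(\pi t/\sin(\pi t))^s$ is bounded on $|t| \le \tau < 1$, so every step preserves this, but one must check that the asymptotic for $\sum_{k \in K_{n,\varepsilon}} \binom{n}{k}^s$ — which is $t$-free — is not contaminated. A minor technical point is handling small $n$ (where $K_{n,\varepsilon}$ may be all of $\{0,\dots,n\}$ or even larger, making the tail empty): for such $n$ the statement is vacuous or trivial since we only claim an asymptotic, so one simply takes $n$ large enough that $n^{1/2+\varepsilon} < n/2$.
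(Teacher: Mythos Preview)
Your argument is correct and close in spirit to the paper's, but the organization is genuinely different. The paper carries out an explicit \emph{tail-exchange}: it replaces $a_{s,t}(k,n)$ on $K_{n,\varepsilon}$ by the Gaussian surrogate
\[
  b_{s,t}(k,n)=\left[\frac{2^{n+1}}{\sqrt{2\pi n}}\,e^{-2(k-n/2)^2/n}\,\frac{\pi t}{\sin(\pi t)}\right]^s,
\]
then sums $b_{s,t}$ over \emph{all} $k\in\mathbb{Z}$ (a theta-type sum evaluated directly), and bounds the three resulting error sums $\sum_{k\notin K}|a|$, $\sum_{k\notin K}|b|$, $\sum_{k\in K}|c|$ separately. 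You instead factor out the $t$-dependence on $K_{n,\varepsilon}$ via Lemma~\ref{lem:prod2} and reduce immediately to the $t$-free quantity $\sum_{k\in K_{n,\varepsilon}}\binom{n}{k}^s$, for which you quote the asymptotic from \cite{gkp-concrete}. Your route is shorter and avoids re-deriving the Gaussian sum, at the cost of treating the binomial asymptotic as a black box; the paper's route is more self-contained and makes the origin of the error exponent $1/2-3\varepsilon$ visible inside this very proof (via \eqref{eq:binom:main:asy}). Either way the tail is handled identically, using \eqref{eq:prod2:bound} together with the super-polynomial decay of $\sum_{k\notin K_{n,\varepsilon}}\binom{n}{k}^s$.
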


\begin{proof}
  Proceeding as in \cite{gkp-concrete}, we obtain that, for $k \in K_{n,
  \varepsilon}$,
  \begin{equation}
    \binom{n}{k}^s = \left[ \frac{2^{n + 1}}{\sqrt{2 \pi n}} e^{- 2 (k - n /
    2)^2 / n} \right]^s \left(1 + O \left(\frac{1}{n^{1 / 2 - 3
    \varepsilon}} \right) \right) . \label{eq:binom:main:asy}
  \end{equation}
  Combined with Lemma~\ref{lem:prod2}, we find
  \begin{equation*}
    a_{s, t} (k, n) = \left[ \frac{2^{n + 1}}{\sqrt{2 \pi n}} e^{- 2 (k - n /
     2)^2 / n}  \frac{\pi t}{\sin (\pi t)} \right]^s \left(1 + O \left(\frac{1}{n^{1 / 2 - 3 \varepsilon}} \right) \right) .
  \end{equation*}
  In other words, we have, for $k \in K_{n, \varepsilon}$,
  \begin{equation*}
    a_{s, t} (k, n) = b_{s, t} (k, n) + O (c_{s, t} (k, n)),
  \end{equation*}
  where
  \begin{equation*}
    b_{s, t} (k, n) = \left[ \frac{2^{n + 1}}{\sqrt{2 \pi n}} e^{- 2 (k - n /
     2)^2 / n}  \frac{\pi t}{\sin (\pi t)} \right]^s
  \end{equation*}
  and (because $\pi t / \sin (\pi t) = O (1)$)
  \begin{equation*}
    c_{s, t} (k, n) = 2^{n s} e^{- 2 s (k - n / 2)^2 / n}  \frac{1}{n^{(s +
     1) / 2 - 3 \varepsilon}} .
  \end{equation*}
  We now apply the tail-exchange method and estimate
  \begin{align}
    \sum_k a_{s, t} (k, n) & = \sum_k b_{s, t} (k, n) + O \left(\sum_{k
    \notin K_{n, \varepsilon}} | a_{s, t} (k, n) | \right) + O \left(\sum_{k
    \notin K_{n, \varepsilon}} | b_{s, t} (k, n) | \right) \nonumber\\
    &\quad + O \left(\sum_{k \in K_{n, \varepsilon}} | c_{s, t} (k, n) |
    \right) .  \label{eq:abc:sum}
  \end{align}
  The idea, of course, being that the first term on the right-hand side of
  \eqref{eq:abc:sum}, namely
  \begin{align}
    \sum_k b_{s, t} (k, n) & = \left[ \frac{2^{n + 1}}{\sqrt{2 \pi n}}
    \,\frac{\pi t}{\sin (\pi t)} \right]^s \sum_k e^{- 2 s (k - n / 2)^2 / n}
    \nonumber\\
    & = \left[ \frac{2^{n + 1}}{\sqrt{2 \pi n}} \,\frac{\pi t}{\sin (\pi t)}
    \right]^s \sqrt{\frac{\pi n}{2 s}} \left(1 + O \left(e^{- \frac{n
    \pi^2}{2 s}} \right) \right) \nonumber\\
    & = \frac{2^{n s}}{\sqrt{s (\pi n / 2)^{s - 1}}} \left[ \frac{\pi
    t}{\sin (\pi t)} \right]^s \left(1 + O \left(e^{- \frac{n \pi^2}{2 s}}
    \right) \right),  \label{eq:b:sum}
  \end{align}
  provides the asymptotics for the left-hand side of \eqref{eq:abc:sum} while
  the other terms are negligible in comparison. Indeed,
  \begin{equation*}
    \sum_{k \in K_{n, \varepsilon}} | c_{s, t} (k, n) | \leq \sum_k
     c_{s, t} (k, n) = \frac{2^{n s}}{n^{(s + 1) / 2 - 3 \varepsilon}} \sum_k
     e^{- 2 s (k - n / 2)^2 / n}
  \end{equation*}
  is asymptotically smaller than \eqref{eq:b:sum} provided that $3 \varepsilon
  < \frac{1}{2}$ (note that adding this contribution to \eqref{eq:b:sum}
  requires adjusting the error term in \eqref{eq:b:sum} to the one claimed in
  \eqref{eq:a:sum}). Likewise,
  \begin{align*}
    \sum_{k \notin K_{n, \varepsilon}} | b_{s, t} (k, n) | & = \left|
    \frac{2^{n + 1}}{\sqrt{2 \pi n}} \, \frac{\pi t}{\sin (\pi t)} \right|^s
    \sum_{k \notin K_{n, \varepsilon}} e^{- 2 s (k - n / 2)^2 / n}
  \end{align*}
  is asymptotically smaller than \eqref{eq:b:sum} because the right-hand side
  sum is $O (n^{- M})$ for all $M$ (here, we use that $\varepsilon > 0$).
  Thirdly, by \eqref{eq:prod2:bound},
  \begin{equation*}
    \sum_{k \notin K_{n, \varepsilon}} | a_{s, t} (k, n) | = O (n^{2 s})
     \sum_{k \notin K_{n, \varepsilon}} \binom{n}{k}^s
  \end{equation*}
  and the right-hand side sum is bounded by $n$ times its largest term, which
  is bounded by the one corresponding to $k^{\ast} = \left\lfloor \frac{n}{2}
  + n^{1 / 2 + \varepsilon} \right\rfloor \in K_{n, \varepsilon}$. In
  particular, applying \eqref{eq:binom:main:asy} to that term, reveals that
  \begin{equation*}
    \sum_{k \notin K_{n, \varepsilon}} | a_{s, t} (k, n) | =
     \binom{n}{k^{\ast}}^s O (n^{2 s + 1}) = \left[ \frac{2^{n + 1}}{\sqrt{2
     \pi n}} e^{- 2 (k^{\ast} - n / 2)^2 / n} \right]^s O (n^{2 s + 1})
  \end{equation*}
  is asymptotically smaller than \eqref{eq:b:sum} as well.
\end{proof}

Combining \eqref{eq:franel:asy} and \eqref{eq:a:sum}, we conclude the desired
limit \eqref{eq:franel:lim:sin:expl}, including the required uniform
convergence.

\section{Lower bounds for telescoping recurrences}\label{sec:ct:order}

We are now in a position to apply the results on Ap\'ery limits to prove
Theorem~\ref{thm:franel:conj:ct}. That is, we wish to conclude that any
telescoping recurrence satisfied by $A^{(s)} (n)$ has order at least $\lfloor
(s + 1) / 2 \rfloor$.

\begin{proof}[Proof of Theorem~\textup{\ref{thm:franel:conj:ct}}]
  By Theorem~\ref{thm:franel:limits}, any telescoping recurrence satisfied by
  $A^{(s)} (n)$ is also solved, for large enough $n$, by the $\lfloor (s + 1)
  / 2 \rfloor$ sequences $A^{(s)}_j (n) \in\mathbb{Q}$ defined in \eqref{eq:A:j},
  where $j \in \{ 0, 1, \ldots, \lfloor (s - 1) / 2 \rfloor \}$. We recall
  from \cite[Theorem~8.2.1]{aeqb} that a recurrence with polynomial
  coefficients has order $r$ if and only if the space of its solutions, upon
  identifying sequences that eventually agree, has dimension $r$.
  
  Therefore, to conclude that any telescoping recurrence satisfied by $A^{(s)}
  (n)$ has order at least $r = \lfloor (s + 1) / 2 \rfloor$, it suffices to
  show that the $r$ solutions $A^{(s)}_j (n)$, $j \in \{ 0, 1, \ldots, r - 1
  \}$, upon this identification, are linearly independent.
  As these solutions are rational-valued, assuming their linear dependence,
  there must necessarily exist a dependence relation over $\mathbb Q$.  This means that
  \begin{equation}
    0 = \sum_{j = 0}^{r - 1} \lambda_j A_j^{(s)} (n), \quad \lambda_j \in \mathbb{Q},
  \label{eq:A:lindep}
  \end{equation}
  for large enough $n$. Now, upon
  dividing \eqref{eq:A:lindep} by $A^{(s)} (n)$ and taking the limit as $n \rightarrow \infty$, we find
  out that
  \begin{equation*}
    0 = \lim_{n \rightarrow \infty} \sum_{j = 0}^{r - 1} \lambda_j
     \frac{A^{(s)}_j (n)}{A^{(s)} (n)} = \sum_{j = 0}^{r - 1} \lambda_j
     \varphi_j \pi^{2 j},
  \end{equation*}
  where the latter equality uses the Ap\'ery limits established in
  Theorem~\ref{thm:franel:limits}. Since $\lambda_j \varphi_j \in \mathbb{Q}$,
  the transcendence of $\pi$ implies that $\lambda_j \varphi_j = 0$ for all $j
  \in \{ 0, 1, \ldots, r - 1 \}$. We know that $\varphi_j \neq 0$, so we must
  have $\lambda_j = 0$ for all $j \in \{ 0, 1, \ldots, r - 1 \}$, proving the
  desired linear independence of the $r$ solutions $A^{(s)}_j (n)$.
\end{proof}

\begin{remark}
  \label{rk:franel:recs}The computations of Perlstadt
  \cite{perlstadt-franel} and McIntosh \cite{mcintosh-phd} show that a
  telescoping recurrence equation of (the conjectured to be minimal) order $m
  = \lfloor (s + 1) / 2 \rfloor$ exists for $s \leq 10$. We have extended
  these computations to all $s \leq 20$ using Koutschan's implementation
  \cite{koutschan-phd} \texttt{HolonomicFunctions} in Mathematica and, in
  each case, obtained a recurrence of order $m$ (the minimality of these
  recurrence operators was then confirmed using the function
  \texttt{MinimalRecurrence} from the \texttt{LREtools} Maple package).
  
  These computations suggest that a minimal-order recurrence of $A^{(s)} (n)$
  can always be obtained via creative telescoping. Moreover, Alin Bostan
  observes that this minimal recurrence of order
  $m$ has polynomial coefficients of degree
  \begin{equation*}
d = \begin{cases}
\frac13 m (m^2 - 1) + 1, & \text{for even $s$}, \\[1mm]
\frac13m^3-\frac12m^2+\frac23m + \frac{(-1)^m-1}4, & \text{for odd $s$}.
\end{cases}
  \end{equation*}
In particular, the degree appears to grow like $s^3 / 24$ (rather than being
  bounded by $s - 1$ as Franel incorrectly predicted in \cite{franel95}). As for the rational
  certificate, when written in lowest terms and with integer coefficients, we further find out that its
  denominator is given by
  \begin{equation*}
    (n - k + 1)_m^s = \prod_{j = 1}^m (n - k + j)^s
  \end{equation*}
  and, thus, has degree $m s$ in each of $n$ and $k$. The corresponding
  numerator has degree $m s + \delta_2 (s)$ in the variable $k$ (as used in
  Remark~\ref{rk:A:t:rec:n0}), where the delta notation is for $\delta_r (s) = 0$ unless $r$ divides $s$
  in which case $\delta_r (s) = 1$. At the same time, its degree in $n$ is $d
  + s (s - 1 - \delta_2 (s)) / 2 - \delta_6 (s)$. Moreover, the numerator is
  \begin{equation*}
    k^s \prod_{j \geq 1} (n + j)^{\max (0, s + 2 - 4 j - (- 1)^s)}
  \end{equation*}
  times a (large) irreducible factor. These observations hold true for $s
  \leq 20$, and it is natural to expect that the patterns persist for
  larger $s$ as well.
  
  If desired, the above computations can readily be extended to larger $s$.
  Readers interested in computing telescoping recurrence equations for large
  $s$ might find value in considering a guess-and-prove approach (with the
  above observations taken into account) such as described, for instance, in
  \cite{pillwein-pos} for a different hypergeometric sum.
\end{remark}

\section{Conclusions}

We have explicitly determined the Ap\'ery limits associated to the
generalized Franel numbers, resolving the explicit conjectures in
\cite{cs-aperylimits}. As a novel application of Ap\'ery limits, we proved
in Theorem~\ref{thm:franel:conj:ct} that Franel's conjecture is true if the
minimal-order recurrence satisfied by $A^{(s)} (n)$ is a telescoping
recurrence equation. It would be useful to establish general conditions under
which it can be guaranteed that creative telescoping is able to determine a
recurrence of minimal order. As a rare result of this type, we mention that
Schneider \cite[Corollary~7.4]{schneider-indep} proves that creative
telescoping finds a minimal (inhomogeneous) recurrence for certain sums over
hypergeometric terms $a (n, k)$ where the summation bounds are independent of
$n$ but finite. On the other hand, we refer to Paule
\cite[Section~11.2]{paule-contiguous} for an example in which creative
telescoping is not able to find a recurrence of minimal order. We echo
Chyzak's \cite[p.~52]{chyzak-abc} comment that ``a theoretical explanation
is still missing and would be welcome in order to design algorithms for
minimal-order annihilators.'' From a different point of view, it is not
necessarily that creative telescoping suffers from missing a minimal-order
recurrence for a given $D$-finite sequence $A (n)$ but that the sequence
itself always possesses multiple hypergeometric representations, also as
multiple binomial sums, and that we \emph{a priori} have no knowledge on
which of those the algorithm will produce the optimal outcome.

As mentioned in the introduction, Stoll \cite{stoll-rec-bounds}, as well as
Cusick \cite{cusick-rec}, construct recurrences for the generalized Franel
numbers (of the conjectured order). It would be of interest to see if these
constructions can be augmented to show that they actually result in
telescoping recurrences.

As noted in the introduction (see also Remark~\ref{rk:mcintosh} there), there
is a shortage of general results that make it possible to prove lower bounds
on the order of recurrences satisfied by $D$-finite sequences. We expect that
the present approach can be applied to other families of binomial sums to
compute the corresponding Ap\'ery limits and to prove lower bounds for their
minimal telescoping recurrences.

\subsection*{Acknowledgements}

We thank Alin Bostan and Christoph Koutschan for their generous and expertly
advice on the current state of computer algebra with regards to computing (for
fixed parameter $s$) recurrences for the generalized Franel numbers and
certifying their minimality. In particular, Alin kindly shared the empirical
observation on the degrees of the minimal-order recurrences that is reported
in Remark~\ref{rk:franel:recs}.

The first author gratefully acknowledges support through a Collaboration Grant
(\#514645) awarded by the Simons Foundation.


\end{document}